\listfiles 
\documentclass[10pt]{article}
\synctex=1
\usepackage{graphicx}
\usepackage{amsmath}
\usepackage{amssymb}
\usepackage{amsthm}
\usepackage{epsfig}
\usepackage{mathrsfs}
\usepackage{multirow}
\usepackage[usenames,dvipsnames]{color}
\usepackage{setspace}
\usepackage{algorithm, algorithmic} 
\usepackage{multicol}
\usepackage{subcaption}
\usepackage[margin=2.54cm]{geometry}
\usepackage{lipsum}
\usepackage{bigints}
\usepackage{framed}
\usepackage{colortbl}
\usepackage{calligra}
\usepackage{mathrsfs}
\usepackage{enumerate}
\usepackage{hyperref}
\usepackage{enumitem}
\usepackage[square, numbers, sort]{natbib}
\usepackage{cleveref}
\usepackage{xcolor}
\usepackage{mathtools}

\usepackage{appendix}

\DeclareMathAlphabet{\mathcalligra}{T1}{calligra}{m}{n}

\newtheorem{theorem}{Theorem}

\newtheorem{corollary}{Corollary}

\newtheorem{proposition}{Proposition}

\newcommand{\bm}{\boldsymbol}
\newcommand{\bs}{\boldsymbol}

\def\RR{ {\mathbb{R}}}

\newcommand{\mc}[1]{\mathcal{#1}}
\newcommand{\mb}[1]{\mathbb{#1}}

\newcommand{\xit}{\tilde{\bm{\xi}}}
\newcommand{\gammat}{\tilde{\bm{\gamma}}}

\newcommand{\bxi}{\bm \xi}

\newcommand{\EE}{{\mathbb{E}}}
\newcommand{\EErho}{{\mathbb{E}_{\bm\gamma}}}
\newcommand{\EErhohat}{{\hat{\mathbb{E}}_{\bm\gamma}}}


\setlength{\bibsep}{0pt}

\doublespacing
\title{
Generalization Bounds for Contextual Stochastic Optimization using Kernel Regression}
\author{
Yijie Wang\thanks{Graduate Program in Operations Research and Industrial Engineering, 
The University of Texas at Austin, Austin, TX 78712-1591, USA. Email: {\tt yijie-wang@utexas.edu}.}
\ \ \ \ \ \ \
Grani A. Hanasusanto\thanks{Department of Industrial and Enterprise Systems Engineering, University of Illinois Urbana-Champaign, Urbana, IL 61801, USA. Email: {\tt gah@illinois.edu}.}
\ \ \ \ \ \ \
Chin Pang Ho\thanks{School of Data Science, City University of Hong Kong, Hong Kong. Email: {\tt clint.ho@cityu.edu.hk}.}%
}
\date{}

\begin{document}

\maketitle

\begin{abstract}
In this paper, we consider contextual stochastic optimization using Nadaraya-Watson kernel regression, which is one of the most common approaches in nonparametric regression. Recent studies have explored the asymptotic convergence behavior of using Nadaraya-Watson kernel regression in contextual stochastic optimization; however, the performance guarantee under finite samples remains an open question. This paper derives a finite-sample generalization bound of the Nadaraya-Watson estimator with a spherical kernel under a generic loss function. Based on the generalization bound, we further establish a suboptimality bound for the solution of the Nadaraya-Watson approximation problem relative to the optimal solution. Finally, we derive the optimal kernel bandwidth and provide a sample complexity analysis of the Nadaraya-Watson approximation problem.
\noindent  \\ \\
\noindent Keywords: contextual stochastic optimization; Nadaraya-Watson estimator; finite-sample guarantee
\end{abstract}

\section{Introduction}
In the presence of uncertainty, decision makers can leverage contextual information such as weather conditions and macroeconomic indicators to refine the decision-making process~\cite{sadana2024survey}. In this paper, we consider a general stochastic optimization framework that incorporates contextual information into the decision-making process 
\begin{equation}\label{eq:conditional_expectation_problem}
\min_{\bm x\in\mathcal X}\;\left\{ \EErho[\ell(\bm x,\xit)]\coloneqq\EE[\ell(\bm x,\xit)\,|\,\gammat=\bm\gamma]\right\},
\end{equation}
where the vector $\bm x\in\RR^d$ represents the decision variables, $\xit \in \RR^q$ denotes the uncertain parameters, and $\gammat \in \RR^p$ denotes contextual covariates. While the uncertain parameters $\xit$ and contextual covariates $\gammat$ are both random, the decision-maker can observe the realization of $\gammat$ before the decision-making stage. Hence, to exploit this side information, he solves a stochastic optimization problem with the conditional expectation of the random cost $\ell(\bm x,\xit)$ as an objective function, given the realization~$\bm\gamma$.

Unfortunately, problem~\eqref{eq:conditional_expectation_problem} is challenging to solve since the joint distribution of $(\xit, \gammat)$ is usually unknown, and only a set of historical observations $\{(\bm\xi^i,\bm\gamma^i)\}_{i=1}^n$ is available. A common approach to approximate~\eqref{eq:conditional_expectation_problem} is using Nadaraya-Watson kernel regression~\citep{nadaraya:64,watson:64}, which estimates the conditional expectation term in a data-driven manner by
\begin{equation}\label{eq:nadaraya-watson}
\displaystyle\hat\EE_{\bm\gamma}[\ell(\bm x,\xit)]=\frac{\sum_{i=1}^n\mathcal K\left(\frac{\bm\gamma-\bm\gamma^i}{h}\right)\ell(\bm x,\bm\xi^i)}{\sum_{i=1}^n\mathcal K\left(\frac{\bm\gamma-\bm\gamma^i}{h}\right)},
\end{equation}
where $\mathcal K$ is a prescribed kernel function and $h > 0$ is the bandwidth parameter. In this paper, we consider the spherical kernel~\cite{tibshirani2013nonparametric} 
\begin{equation}\label{eq:naive_kernel}
\mathcal K(\bm \theta) = \mb I(\lVert \bm \theta \rVert \leq h).
\end{equation}
Using the estimator~\eqref{eq:nadaraya-watson}, we arrive at the following Nadaraya-Watson approximation problem~\cite{Hannah:10} to the contextual stochastic optimization problem~\eqref{eq:conditional_expectation_problem}:
\begin{equation}
\tag{$\mathcal {NW}$}
\label{eq:nadaraya-watson_problem}
\min_{\bm x\in\mathcal X}\;\EErhohat[\ell(\bm x,\xit)]. 
\end{equation}

\subsection{Related Work} 
The statistical properties and convergence behavior of Nadaraya-Watson estimations have been widely studied in recent years. In the nonparametric statistics literature, existing results on the Nadaraya-Watson estimator (when the decision is fixed) primarily focus on analyzing the mean square errors and the probabilistic convergence, but they lack finite-sample analysis~\cite{belkin2018overfitting,belkin2019does,gyorfi2006distribution,tibshirani2013nonparametric}.

In the contextual stochastic optimization settings, Bertsimas and Kallus~\cite{bertsimas2014predictive} show that the Nadaraya-Watson estimator converges to the true conditional expectation as the sample size grows, meaning that the approximation problem is asymptotically consistent. They further derive a performance guarantee by assuming $\tilde{\bm{\xi}}$ or $\bm{x}$ to be linear in $\tilde{\bm{\gamma}}$. Bertsimas and Van Parys~\cite{bertsimas2017bootstrap} derive an alternative bound for the bootstrap data that is obtained through resampling from the empirical distribution. For the specific single-item newsvendor problem,  Ban and Rudin~\cite{ban2018big} employ the Nadaraya-Watson approximation with linear decision rule and derive a finite-sample performance guarantee for the solution. Unfortunately, the bound relies on the decision rule approximation and holds only for the specific one-dimensional newsvendor problem. 
For generic problem settings and loss functions,  Srivastava et al.~\cite{srivastava2021data} establish a generalization bound by leveraging techniques from large and moderate deviations theory. However, the derived generalization bound is asymptotic. Hence, the finite-sample behavior of the Nadaraya-Watson approximation problem remains an open question. 

Finally, we remark that finite-sample bounds have been established for the \emph{sample-average approximation} scheme in traditional \emph{un}conditional stochastic optimization \cite{kleywegt2002sample,SN2005}.  A recent work~\cite{hu2020sample} derives generalization bounds in a non-data-driven setting where conditional samples of $\xit$  for any given $\bm\gamma$ can be obtained. Our present work is motivated by the lack of results in \emph{data-driven} contextual stochastic optimization settings, where we utilize kernel regression as a ``conditional sample-average approximation." 

\subsection{Contribution}
We summarize below the main contributions of the paper: 
\begin{enumerate}
\item We derive a finite-sample generalization bound for the Nadaraya-Watson estimator with a spherical kernel. To the best of our knowledge, our paper is the first to establish a finite-sample bound for the Nadaraya-Watson estimator under a \emph{generic} loss function. 
\item Based on the finite-sample generalization bound, we further establish a suboptimality bound for the solution of the approximation problem~\eqref{eq:nadaraya-watson_problem} relative to the optimal solution of the true problem~\eqref{eq:conditional_expectation_problem}. As part of the result, we derive the optimal scaling rate for the kernel bandwidth $h$ and analyze the sample complexity of the Nadaraya-Wation approximation problem, providing theoretical guidance for applying kernel regression in practical contextual stochastic optimization problems. 
\end{enumerate}

\noindent \textbf{Notation and terminology} We use bold letters for vectors, while scalars are printed in regular font. Random variables are designated by tilde signs (e.g., $\xit$), while their realizations are represented by the same symbols without tildes (e.g., $\bm\xi$). For any $n\in\mathbb N$, we define $[n]$ as the  index set $\{1,\ldots,n\}$. Without further specification, $\|\cdot\|$ denotes the Euclidean norm. The Gamma function is denoted by~$\Gamma$, where $\Gamma(z) = \int_0^\infty t^{z-1} e^{-t} dt$ for any complex number $z$ with a positive real part. For asymptotic analysis, we use $\tilde{O}$ notation to denote the $O$ notation that suppresses multiplicative terms with logarithmic dependence on $n$.

\section{Finite Sample Generalization Bounds}\label{sec:generalization_bound}

In this section, we first derive generalization bounds on the approximation \eqref{eq:nadaraya-watson_problem} for a fixed decision~$\bm x$. We assume the following mild regularity conditions:
\begin{enumerate}[label=(\textbf{A\arabic*})]
\item  The feasible set $\mathcal X$ is bounded.   \label{as1}  
\item The marginal density function $f(\bs \gamma)$ exists and is uniformly bounded from below by $\underline{f}$ such that $ 0< \underline{f} \leq f(\bs \gamma)$ for all $\bs\gamma$ in its support.  \label{as2}
\item The loss function $\ell(\bm x,\bm\xi)$ is $L_x$-Lipschitz continuous in $\bm x$ takes value in the interval $[0,1]$. In addition, the conditional expectation $\mb E[\ell(\bs x,\tilde{\bxi})\lvert \tilde{\bs \gamma}=\bs \gamma]$ is $L_\gamma$-Lipschitz continuous in $\bs \gamma$.  
\label{as3} 
\end{enumerate}
The assumption about the feasible set in \ref{as1} is typical in the literature~\cite{hu2020sample,SN2005,sadana2024survey}.
The assumption about the marginal density function in \ref{as2} is merely to simplify the analysis, as it allows us to analytically bound the probability of $\tilde{\bm \gamma} \in \mathcal{B}_r$ with a simple function of $r$, for any 2-norm ball $\mathcal{B}_r$ with radius $r$.
The Lipschitz continuity assumptions in \ref{as3} are standard mild regularity conditions to establish performance guarantees of kernel regression schemes~\cite{tibshirani2013nonparametric}. The assumption that the loss function takes values in $[0,1]$ follows immediately through scaling and translation if the loss function is bounded.
This boundedness assumption on the loss function is usual in practical contextual stochastic optimization problems, especially when the feasible set of the decision variable $\mathcal X$ is bounded.
It is important to note that the convexity of the loss function is not required for our analysis.

Our main result establishes the suboptimality bound for the solution $\hat{\bm x}$ to the Nadaraya-Watson approximation problem  \eqref{eq:nadaraya-watson_problem} relative to the optimal solution $\bm x^\star$ of the true problem \eqref{eq:conditional_expectation_problem}. 

\begin{theorem}
\label{thm:suboptimality}
    Let  $D=\sup_{\bm x,\bm x'\in\mathcal X}\|\bm x-\bm x'\|$ be the diameter of $\mathcal X$.   Under the bandwidth scaling     
    \begin{equation*}
    h(n) = \left( \frac{2L_\gamma^2nc\underline{f}}{p^2\log\left(\frac{2\mathcal O(1)(D/\tau)^d}{\delta}\right)}\right)^{\frac{-1}{p+2}}
    \end{equation*}
 the following suboptimality bound for the solution $\hat{\bm x}$ of \eqref{eq:nadaraya-watson_problem} holds with probability at least $1-\delta$:
\begin{equation*}
\mb{E}_{\bm \gamma}[\ell(\hat{\bs x}, \tilde{\bs\xi})]  \leq {\mb{E}}_{\bm \gamma}[\ell(\bs x^\star, \tilde{\bs\xi})] +  2L_\gamma^{\frac{p}{p+2}} \frac{p+2}{(4p^p)^{\frac{1}{p+2}}}  \left(\frac{2\log\left(\frac{2\mathcal O(1)(D/\tau)^d}{\delta}\right)}{nc\underline{f}} \right)^{\frac{1}{p+2}}+4L_x\tau.
\end{equation*}
Here, $\tau>0$ is a free parameter that can be adjusted according to the desired approximation quality. 
\end{theorem}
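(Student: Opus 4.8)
The plan is to prove a uniform (over $\mathcal X$) generalization bound and then invoke the standard optimization-error decomposition, optimizing the bandwidth $h$ only at the very end. Concretely, I would first establish a \emph{pointwise} bound showing that, for a fixed decision $\bm x$, the Nadaraya--Watson estimator $\EErhohat[\ell(\bm x,\xit)]$ is close to the true conditional objective $\EErho[\ell(\bm x,\xit)]$ with high probability; then lift this to hold uniformly over $\mathcal X$ via a covering argument combined with the $L_x$-Lipschitz continuity of \ref{as3}; and finally apply the usual sandwich inequality relating $\hat{\bm x}$ and $\bm x^\star$.

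For the pointwise bound I would write the estimator as an average over the index set $I_h=\{i:\|\bm\gamma-\bm\gamma^i\|\le h\}$ of cardinality $N_h=|I_h|$, namely $\EErhohat[\ell(\bm x,\xit)]=\tfrac{1}{N_h}\sum_{i\in I_h}\ell(\bm x,\bm\xi^i)$, and split its deviation from $\EErho[\ell(\bm x,\xit)]$ into a \emph{bias} and a \emph{concentration} term by inserting $\tfrac{1}{N_h}\sum_{i\in I_h}g(\bm\gamma^i)$, where $g(\bm\gamma^i):=\EE[\ell(\bm x,\xit)\mid\gammat=\bm\gamma^i]$. Conditioning on the covariates $\{\bm\gamma^i\}_{i=1}^n$ fixes $I_h$ and renders the losses $\ell(\bm x,\bm\xi^i)$, $i\in I_h$, independent with conditional means $g(\bm\gamma^i)$ and range $[0,1]$. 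The bias $\tfrac{1}{N_h}\sum_{i\in I_h}(g(\bm\gamma^i)-g(\bm\gamma))$ is bounded deterministically by $L_\gamma h$, since every $\bm\gamma^i$ with $i\in I_h$ lies within distance $h$ of $\bm\gamma$ and $g$ is $L_\gamma$-Lipschitz by \ref{as3}; the concentration term is controlled by Hoeffding's inequality (conditional on the covariates), giving a high-probability bound of order $\sqrt{\log(2/\delta')/N_h}$. To make $N_h$ explicit I would use \ref{as2}: $\Pr(\|\bm\gamma-\gammat\|\le h)\ge c\underline{f}h^p$ with $c=\pi^{p/2}/\Gamma(p/2+1)$ the volume of the unit $p$-ball, and a binomial (Chernoff) tail bound then yields $N_h\gtrsim nc\underline{f}h^p$ with high probability. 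Combining, for fixed $\bm x$ one obtains, with probability at least $1-\delta'$,
\begin{equation*}
\big|\EErhohat[\ell(\bm x,\xit)]-\EErho[\ell(\bm x,\xit)]\big|\ \lesssim\ L_\gamma h+\sqrt{\frac{\log(2/\delta')}{nc\underline{f}h^p}}.
\end{equation*}

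To uniformize over $\mathcal X$, I would construct a $\tau$-net; since $\mathcal X\subset\RR^d$ has diameter $D$, its covering number is at most $\mathcal O(1)(D/\tau)^d$. Applying the pointwise bound at each net point with confidence $\delta/\big(\mathcal O(1)(D/\tau)^d\big)$ and taking a union bound makes the inequality hold simultaneously at all net points, which accounts for the $\log\!\big(2\mathcal O(1)(D/\tau)^d/\delta\big)$ factor. For arbitrary $\bm x\in\mathcal X$ with nearest net point $\bm x_0$, the $L_x$-Lipschitz continuity of $\ell(\cdot,\bm\xi)$ transfers to both $\EErho[\ell(\cdot,\xit)]$ and $\EErhohat[\ell(\cdot,\xit)]$ (the latter is a convex combination of the losses and hence inherits the Lipschitz constant), so extending from $\bm x_0$ to $\bm x$ costs an additional $2L_x\tau$. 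The suboptimality bound then follows from the chain
\begin{equation*}
\EErho[\ell(\hat{\bm x},\xit)]\ \le\ \EErhohat[\ell(\hat{\bm x},\xit)]+\epsilon\ \le\ \EErhohat[\ell(\bm x^\star,\xit)]+\epsilon\ \le\ \EErho[\ell(\bm x^\star,\xit)]+2\epsilon,
\end{equation*}
where $\epsilon$ is the uniform bound and the optimality of $\hat{\bm x}$ for \eqref{eq:nadaraya-watson_problem} drives the middle step; the doubling explains both the factor $2$ on the stochastic term and the $4L_x\tau$ in the statement.

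Finally, I would optimize the free bandwidth by minimizing the bias--variance sum $L_\gamma h+\sqrt{\log(\cdots)/(nc\underline{f}h^p)}$ over $h$; equating the two terms produces exactly the stated $h(n)\sim n^{-1/(p+2)}$ together with the $n^{-1/(p+2)}$ rate. The main obstacle I anticipate is the concentration step: because $N_h$ is random and sits in the denominator, the Hoeffding bound must be applied conditionally on the covariates and then coupled with the binomial concentration of $N_h$ itself, and the constants from this two-layer argument have to be tracked carefully through the union bound to land precisely on the closed-form expression in the theorem.
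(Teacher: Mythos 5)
Your proposal is correct and its overall architecture coincides with the paper's: the same bias-plus-concentration decomposition of the pointwise error (the paper calls it a bias--MAD decomposition, introducing the intermediate quantity $\tilde m(\bm\gamma)$, which is exactly your $\frac{1}{N_h}\sum_{i\in I_h} g(\bm\gamma^i)$), the same $\tau$-net of cardinality $\mathcal O(1)(D/\tau)^d$ with a union bound and a $2L_x\tau$ Lipschitz-extension cost, the same sandwich inequality that doubles the error, and the same final optimization over $h$. The one substantive divergence is how the random neighbor count $N_h$ sitting in the denominator is handled. You propose a two-layer argument: a multiplicative Chernoff bound giving $N_h\gtrsim nc\underline f h^p$, followed by Hoeffding conditional on the covariates. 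The paper instead applies conditional Hoeffding first, obtaining the tail $2\exp(-2\epsilon^2 N_h)$, and then integrates over the covariates \emph{exactly}: with $\tilde Z\sim\mathrm{Bin}(n,p_h)$ it evaluates $2\,\mathbb{E}[\exp(-2\epsilon^2\tilde Z)\,\mathbb{I}(\tilde Z>0)]$ in closed form via the binomial theorem and bounds it by $2\exp(-np_h\epsilon^2/2)$ using $(1+x)^n\le e^{nx}$ and $1-e^{-2\epsilon^2}\ge\epsilon^2/2$ for $\epsilon\in[0,1]$. The paper's route is one-shot: it needs no separate concentration of $N_h$, incurs no extra failure probability, and automatically covers the empty-neighborhood event (where the estimator is defined to be $0$ and the bias bound picks up the term $\mathbb{I}(\mathcal E)$), which your outline leaves implicit. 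Your route also works and yields the same $n^{-1/(p+2)}$ rate, but the extra Chernoff failure term $\exp(-\Omega(np_h))$ must be folded into $\delta$, so you would not land exactly on the tail $2\exp(-nc\underline f h^p\epsilon^2/2)$, and hence on the theorem's closed-form constants, without additional slack. A second small point: to obtain the stated bandwidth with its $p^2$ factor you must genuinely minimize $L_\gamma h + \sqrt{2\log(\cdot)/(nc\underline f h^p)}$ via the first-order condition; merely equating the two terms, as you suggest at the end, gives the correct rate but a different constant.
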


 Before providing the proof of the above theorem, we first introduce its practical insights into the sample complexity of the Nadaraya-Watson problem.
\begin{corollary}\label{coro:sample_complex}
To obtain a suboptimality gap of $\epsilon$ with probability at least $1-\delta$, it is sufficient that
\begin{equation*}
n\geq  \frac{2^{2p+3}L_\gamma^p(p+2)^{p+2}\log\left(\frac{2\mathcal O(1)(8DL_x/\epsilon)^d}{\delta}\right)}{p^p\epsilon^{p+2}c\underline{f}}.
\end{equation*}
\end{corollary}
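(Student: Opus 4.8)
The plan is to treat Theorem~\ref{thm:suboptimality} as a black box and simply invert its suboptimality bound for the sample size $n$. The right-hand side of that bound is a sum of two terms: a \emph{statistical} term that decays in $n$, and an \emph{approximation} term $4L_x\tau$ controlled by the free parameter $\tau$. Since the total gap must not exceed $\epsilon$, the natural strategy is to allocate $\epsilon/2$ to each term and require both to be at most $\epsilon/2$ simultaneously.

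First I would handle the approximation term. Setting $4L_x\tau = \epsilon/2$ fixes $\tau = \epsilon/(8L_x)$, which is a legitimate choice because $\tau$ is selected purely as a function of the target accuracy $\epsilon$ and does not depend on $n$, so no circularity arises. A convenient consequence is that the ratio $D/\tau$ appearing inside the logarithm of the statistical term becomes exactly $8DL_x/\epsilon$, which is precisely the quantity surfacing in the statement of the corollary.

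It then remains to force the statistical term below $\epsilon/2$. Abbreviating the logarithmic factor as $L \coloneqq \log\!\left(\tfrac{2\mathcal{O}(1)(8DL_x/\epsilon)^d}{\delta}\right)$, I would impose
\[
2L_\gamma^{\frac{p}{p+2}}\,\frac{p+2}{(4p^p)^{\frac{1}{p+2}}}\left(\frac{2L}{nc\underline{f}}\right)^{\frac{1}{p+2}}\;\le\;\frac{\epsilon}{2},
\]
and solve for $n$ by raising both sides to the power $p+2$. Raising the leading constant to that power collapses the nested exponents: the factor $2$ becomes $2^{p+2}$, the term $L_\gamma^{p/(p+2)}$ becomes $L_\gamma^{p}$, the factor $(p+2)$ becomes $(p+2)^{p+2}$, and $(4p^p)^{-1/(p+2)}$ becomes $(4p^p)^{-1}$. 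Isolating $n$ (which contributes an extra $2^{p+2}$ from $(\epsilon/2)^{p+2}$ together with the $2$ multiplying $L$) and merging all powers of $2$ then yields the claimed lower bound. The only real obstacle is bookkeeping: tracking the constants cleanly through the exponentiation and verifying that the $4$ in $(4p^p)$ cancels against $2^{p+2}$ to leave $2^p$, which combines with the remaining factors of $2$ to give $2^{2p+3}$, with $p^p$ surviving in the denominator. Because the entire argument is a deterministic inversion of the Theorem~\ref{thm:suboptimality} bound evaluated at this $\tau$ (and the corresponding bandwidth $h(n)$), the high-probability guarantee of $1-\delta$ is inherited directly.
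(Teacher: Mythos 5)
Your proposal is correct and follows essentially the same route as the paper's own proof: set $\tau=\epsilon/(8L_x)$ so that $4L_x\tau=\epsilon/2$, impose that the statistical term of Theorem~\ref{thm:suboptimality} is at most $\epsilon/2$, and solve for $n$. Your explicit constant bookkeeping (the powers of $2$ combining to $2^{2p+3}$ with $p^p$ in the denominator) is accurate and merely spells out what the paper leaves as ``solving for the lower bound on $n$.''
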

\begin{proof}[Proof of Corollary~\ref{coro:sample_complex}]
To obtain a suboptimality gap of $\epsilon$, we let $4L_x\tau=\epsilon/2$ by setting $\tau = \frac{\epsilon}{8L_x}$, and we impose \begin{equation*}
 \frac{\epsilon}{2}\geq  2L_\gamma^{\frac{p}{p+2}} \frac{p+2}{(4p^p)^{\frac{1}{p+2}}}   \left(\frac{2\log\left(\frac{2\mathcal O(1)(D/\tau)^d}{\delta}\right)}{nc\underline{f}} \right)^{\frac{1}{p+2}}.
\end{equation*}
Solving for the lower bound on $n$ yields a sufficient number of samples. This completes the proof.  
\end{proof}
The sample complexity of the Nadaraya-Watson approximation is  $\tilde{\mathcal O}(1/\epsilon^{p+2})$, which grows exponentially in the dimension of the covariates. This suggests that our purely data-driven setting, where we do not impose stringent assumptions on the joint distribution of $\gammat$ and $\xit$, suffers from the \emph{curse of dimensionality}.   A more favorable sample complexity is achievable by using the traditional sample-average approximation in a non-data-driven setting, where conditional samples of $\xit$ can be collected for any given $\bm\gamma$  \cite{hu2020sample}. Alternatively, if $\xit$ can be decomposed into a sum of linear regression function in $\bm\gamma$ and an independent random noise, then an improved sample complexity can be obtained using the \emph{residuals-based sample-average approximation} \cite{kannan2020data}.

The proof of Theorem \ref{thm:suboptimality} relies on the following result that establishes a high confidence bound on the errors of the Nadaraya-Watson estimate for any fixed $\bm x$. To our knowledge, the finite-sample generalization bound for kernel regression has not been derived in the literature on nonparametric statistics. Given that the Nadaraya-Watson estimator is biased, the proof of the proposition first decomposes the error into bias and mean-absolute deviation (MAD) terms, and then derives bounds for each term separately.

\begin{proposition}[Finite-Sample Generalization Bound]\label{thm:navie_finite_bd}
Consider the following Nadaraya-Watson estimator
\begin{equation*}
\hat{\mb E}_{\bm{\gamma}}[\ell(\bs x,\tilde{\bs \xi})] = \left\{
\begin{array}{ll}
\displaystyle \sum_{i=1}^{n}\ell(\bs x, \bs \xi^i) \frac{\mb I(\lVert \bs \gamma -\bs \gamma^i \rVert \leq h)}{\sum_{j=1}^n \mb I(\lVert \bs \gamma -\bs \gamma^j \rVert \leq h) }&\displaystyle \textup{if} \; \sum_{j=1}^n \mb I(\lVert \bs \gamma -\bs \gamma^j \rVert \leq h)>0 \\
0& \textup{otherwise.}
\end{array}\right.
\end{equation*}
For any fixed $\bs x \in \mc X$, $\epsilon \in [0,1]$ and $n \in \mb Z_{++}$, we have
\begin{equation}\label{eq:finite_sample_bound}
\left|\mb E_{\bm \gamma}[\ell(\bs x,\tilde{\bs \xi})] - \hat{\mb E}_{\bm \gamma}[\ell(\bs x,\tilde{\bs \xi})]\right| \leq L_\gamma h + \epsilon
\end{equation}
with probability at least $1-2 \exp\left(-nc\underline{f}h^p \epsilon^2/2\right)$, where $c=\frac{\pi^{p/2}}{\Gamma(p/2 + 1)}$  is a constant that depends only on $p$. 
\end{proposition}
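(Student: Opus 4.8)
The plan is to follow the bias--mean-absolute-deviation (MAD) decomposition indicated in the text, arguing conditionally on the covariate sample and then averaging out the random number of neighbors. Write $\B=\{\bm\gamma':\|\bm\gamma'-\bm\gamma\|\le h\}$ for the query ball, let $N=\sum_{j=1}^n\mb I(\bm\gamma^j\in\B)$ be the number of observations inside it, and define the conditional-mean function $\bar\ell(\bm\gamma'):=\EE[\ell(\bm x,\xit)\mid\gammat=\bm\gamma']$, so that $\EErho[\ell(\bm x,\xit)]=\bar\ell(\bm\gamma)$. I would condition throughout on the whole covariate sample $\bm\gamma^1,\dots,\bm\gamma^n$: under the i.i.d. sampling model the $\bm\xi^i$ are then conditionally independent with $\EE[\ell(\bm x,\bm\xi^i)\mid\bm\gamma^1,\dots,\bm\gamma^n]=\bar\ell(\bm\gamma^i)$, and the index set $I=\{i:\bm\gamma^i\in\B\}$ becomes deterministic.

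On the event $\{N=m\}$ with $m\ge 1$, inserting $\frac1m\sum_{i\in I}\bar\ell(\bm\gamma^i)$ and applying the triangle inequality splits the estimation error as
\[
\left|\EErhohat[\ell(\bm x,\xit)]-\bar\ell(\bm\gamma)\right|\;\le\;\frac1m\left|\sum_{i\in I}\big(\ell(\bm x,\bm\xi^i)-\bar\ell(\bm\gamma^i)\big)\right|+\frac1m\sum_{i\in I}\left|\bar\ell(\bm\gamma^i)-\bar\ell(\bm\gamma)\right|.
\]
The second (bias) term is deterministic given the covariates and is bounded by $L_\gamma h$, since $\|\bm\gamma^i-\bm\gamma\|\le h$ for every $i\in I$ and $\bar\ell$ is $L_\gamma$-Lipschitz by assumption~\ref{as3}. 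Hence the event that the error exceeds $L_\gamma h+\epsilon$ forces the first (MAD) term to exceed $\epsilon$; as the summands $\ell(\bm x,\bm\xi^i)$, $i\in I$, are conditionally independent and lie in $[0,1]$ by~\ref{as3}, Hoeffding's inequality gives a conditional tail of at most $2\exp(-2m\epsilon^2)$.

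The remaining and, I expect, most delicate step is to average over the random count $N$, whose distribution is $\mathrm{Bin}(n,p_h)$ with $p_h:=\PP(\gammat\in\B)$; the conditional bound degrades for small $m$ and is vacuous at $m=0$. The clean way around this is to note that at $m=0$ the conditional probability is trivially at most $1\le 2=2e^{-2\cdot 0\cdot\epsilon^2}$, so the bound $2e^{-2m\epsilon^2}$ in fact holds for \emph{every} realization $m\ge 0$. Taking expectations and using the binomial moment generating function,
\[
\PP\!\left(\left|\EErhohat[\ell(\bm x,\xit)]-\EErho[\ell(\bm x,\xit)]\right|>L_\gamma h+\epsilon\right)\le 2\,\EE\!\left[e^{-2N\epsilon^2}\right]=2\big(1-p_h(1-e^{-2\epsilon^2})\big)^n\le 2\exp\!\big(-np_h(1-e^{-2\epsilon^2})\big).
\]
I would then close the argument with two elementary bounds: assumption~\ref{as2} together with the closed-form volume $ch^p$ of the radius-$h$ Euclidean ball in $\RR^p$, $c=\pi^{p/2}/\Gamma(p/2+1)$, yields $p_h\ge c\underline f h^p$; and the inequality $1-e^{-2\epsilon^2}\ge\epsilon^2/2$ holds for $\epsilon\in[0,1]$, which follows because $\phi(u):=1-e^{-2u}-u/2$ is concave in $u=\epsilon^2$ and the minimum of a concave function over $[0,1]$ is attained at an endpoint, with $\phi(0)=0$ and $\phi(1)=1-e^{-2}-\tfrac12>0$. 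Substituting both into the exponent produces $-nc\underline f h^p\epsilon^2/2$, giving the stated confidence level $1-2\exp(-nc\underline f h^p\epsilon^2/2)$.
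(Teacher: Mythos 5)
Your proof is correct and takes essentially the same route as the paper: the same bias--MAD decomposition conditional on the covariates, the same conditional Hoeffding bound, the same binomial moment-generating-function computation yielding $2\bigl(1-p_h(1-e^{-2\epsilon^2})\bigr)^n$, and the same two closing inequalities ($p_h\ge c\underline{f}h^p$ and $1-e^{-2\epsilon^2}\ge\epsilon^2/2$ on $[0,1]$). The only difference is bookkeeping for the empty-neighborhood event: you absorb $N=0$ into the trivial bound $1\le 2e^{-2\cdot 0\cdot\epsilon^2}$, whereas the paper tracks it separately via $(1-p_h)^n$ and a union bound, but both arguments coincide from that point onward.
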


\begin{proof}[Proof of \Cref{thm:navie_finite_bd}]
We first define
\begin{equation*}
\tilde{m}(\bs \gamma) = 
\left\{\begin{array}{ll}
\displaystyle \sum_{i=1}^{n} \mb E[\ell(\bs x, \tilde{\bs\xi})\lvert \tilde{\bs\gamma}=\bs \gamma^i] \frac{\mb I(\lVert \bs \gamma -\bs \gamma^i \rVert \leq h)}{\sum_{j=1}^n \mb I(\lVert \bs \gamma -\bs \gamma^j \rVert \leq h) } &\displaystyle \textup{if} \; \sum_{j=1}^n \mb I(\lVert \bs \gamma -\bs \gamma^j \rVert \leq h)>0 , \\
0& \textup{otherwise.}
\end{array}\right.
\end{equation*}
Then, by triangle inequality, we have 
\begin{equation}
\label{bias-MAD}
\begin{aligned}
\left \lvert \mb{E}_{\bm \gamma}[\ell(\bs x, \tilde{\bs\xi})] - \hat{\mb{E}}_{\bm \gamma}[\ell(\bs x, \tilde{\bs\xi})] \right\rvert 
\leq \underbrace{\lvert \mb{E}_{\bm \gamma}[\ell(\bs x, \tilde{\bs\xi})] -\tilde{m}(\bs \gamma) \rvert}_{(a)} + \underbrace{\lvert   \tilde{m}(\bs \gamma)-\hat{\mb{E}}_{\bm \gamma}[\ell(\bs x, \tilde{\bs\xi})] \rvert}_{(b)}.
\end{aligned}
\end{equation}
Conditioning on the covariate samples $\{ \bs \gamma^i\}_{i=1}^n$, we observe that 
\[\tilde{m}(\bs \gamma) = \mb{E}\left[\hat{\mb E}_{\bm{\gamma}}[\ell(\bs x,\tilde{\bs \xi})]\Big\lvert\{ \bs \gamma^i\}_{i=1}^n\right].\] 
Thus, the upper bound in \eqref{bias-MAD} can be interpreted as a (conditional) bias-MAD decomposition of the error, where (a) corresponds to the conditional bias of the Nadaraya-Watson estimator, while (b) corresponds to its absolute deviation from the conditional mean. In what follows, we derive an upper bound of each of the terms (a) and (b) separately. We first consider the term~(a) and denote $\mb I(\mathcal E)$ to be the indicator function for event $\displaystyle \mathcal E :=  {\displaystyle \cap }_{i=1}^n \{ \lVert \bs \gamma - \bs \gamma^i \lVert > h \} $. We have
\begin{equation*}
\displaystyle
\begin{aligned}
\lvert  \mb{E}_{\bm \gamma}[\ell(\bs x, \tilde{\bs\xi})] -\tilde{m}(\bs \gamma) \rvert 
&=  \bigg\lvert \left( \mb{E}_{\bm \gamma}[\ell(\bs x, \tilde{\bs\xi})]  - \sum_{i=1}^{n} \mb{E}_{\bm \gamma^i}[\ell(\bs x, \tilde{\bs\xi})]  \frac{\mb I(\lVert \bs \gamma -\bs \gamma^i \rVert \leq h)}{\sum_{j=1}^n \mb I(\lVert \bs \gamma -\bs \gamma^j \rVert \leq h) }\right) \mb{I}(\mathcal E^c) + \mb{E}_{\bm \gamma}[\ell(\bs x, \tilde{\bs\xi})] \mb I(\mathcal E)\bigg \rvert \\
&\leq \bigg\lvert \mb{E}_{\bm \gamma}[\ell(\bs x, \tilde{\bs\xi})] - \sum_{i=1}^{n} \mb{E}_{\bm \gamma^i}[\ell(\bs x, \tilde{\bs\xi})] \frac{\mb I(\lVert \bs \gamma -\bs \gamma^i \rVert \leq h)}{\sum_{j=1}^n \mb I(\lVert \bs \gamma -\bs \gamma^j \rVert \leq h) } \bigg \rvert + \mb I(\mathcal E) \\
&= \bigg\lvert  \sum_{i=1}^{n} \left( \mb{E}_{\bm \gamma}[\ell(\bs x, \tilde{\bs\xi})] - \mb{E}_{\bm \gamma^i}[\ell(\bs x, \tilde{\bs\xi})]\right) \frac{\mb I(\lVert \bs \gamma -\bs \gamma^i \rVert \leq h)}{\sum_{j=1}^n \mb I(\lVert \bs \gamma -\bs \gamma^j \rVert \leq h) } \bigg \rvert + \mb I(\mathcal E) \\
&\leq  \sum_{i=1}^{n} \left( L_\gamma \Vert \bm{\gamma} - \bm{\gamma}^i \Vert \cdot \bigg\lvert  \frac{\mb I(\lVert \bs \gamma -\bs \gamma^i \rVert \leq h)}{\sum_{j=1}^n \mb I(\lVert \bs \gamma -\bs \gamma^j \rVert \leq h) } \bigg \rvert \right) + \mb I(\mathcal E) \leq  L_\gamma h +  \mb I(\mathcal E),
\end{aligned}
\end{equation*}
where the first inequality follows from the fact that $\mb{E}_{\bm \gamma}[\ell(\bs x, \tilde{\bs\xi})] \leq 1$. In addition, we further have 
\begin{equation}\label{eq:term_a_bound}
\mb P (\tilde{\mathcal E}) = \mb P({\displaystyle \cap }_{i=1}^n \{ \lVert \bs \gamma - \tilde{\bs \gamma}^i \lVert > h \}) = ( 1- p_h)^n ,
\end{equation}
where $p_h = \mb P (\lVert \bs \gamma - \tilde{\bs \gamma}^i \lVert \leq h)$.
Hence, we have
\begin{equation*}
\displaystyle
\begin{aligned}
\lvert  \mb{E}_{\bm \gamma}[\ell(\bs x, \tilde{\bs\xi})] -\tilde{m}(\bs \gamma) \rvert \leq L_\gamma h
\end{aligned}
\end{equation*}
with probability at least $1- ( 1- p_h)^n$.

Next, we obtain an upper bound on the term (b):
\begin{equation*}
\begin{aligned}
\displaystyle
\lvert \tilde{m}(\bs \gamma)- \hat{\mb{E}}_{\bm \gamma}[\ell(\bs x, \tilde{\bs\xi})] \rvert =& \lvert \tilde{m}(\bs \gamma)- \hat{\mb{E}}_{\bm \gamma}[\ell(\bs x, \tilde{\bs\xi})]  \rvert \cdot \mb I  (\mathcal E^c)  + \lvert \tilde{m}(\bs \gamma)- \hat{\mb{E}}_{\bm \gamma}[\ell(\bs x, \tilde{\bs\xi})]  \rvert \cdot \mb I  (\mathcal E)\\
=&\left\lvert \left( \sum_{i=1}^{n}\left(\ell(\bs x,\bs \xi^i) - \mb E_{\bm\gamma^i}[\ell(\bs x, \tilde{\bs\xi} )]\right) \frac{\mb I(\lVert \bs \gamma -\bs \gamma^i \rVert \leq h)}{\sum_{j=1}^n \mb I(\lVert \bs \gamma -\bs \gamma^j \rVert \leq h) } \right) \cdot \mb I  (\mathcal E^c)\right\rvert + \left|(0 - 0)\cdot \mb I  (\mathcal E)\right|\\
 = &  \left\lvert \left( \sum_{i=1}^{n}\left(\ell(\bs x,\bs \xi^i) - \mb E_{\bm\gamma^i}[\ell(\bs x, \tilde{\bs\xi} )]\right) \frac{\mb I(\lVert \bs \gamma -\bs \gamma^i \rVert \leq h)}{\sum_{j=1}^n \mb I(\lVert \bs \gamma -\bs \gamma^j \rVert \leq h) } \right) \cdot \mb I  (\mathcal E^c)\right\rvert
\end{aligned}
\end{equation*}
where the second equality holds because $\tilde{m}(\bs \gamma)$ and $ \hat{\mb{E}}_{\bm \gamma}[\ell(\bs x, \tilde{\bs\xi})]$ are zero conditioned on the event $\mathcal E$.

From above, by the application of Hoeffding's inequality, we get 
\begin{equation*}
\begin{aligned}
&\mb P \left(\lvert\tilde{m}(\bs \gamma)- \hat{\mb{E}}_{\bm \gamma}[\ell(\bs x, \tilde{\bs\xi})] \rvert >\epsilon \Big\lvert \{\bs \gamma^i\}_{i=1}^n, \mathcal E^c\right)\\ 
&= \mb P\left(\left|\sum_{i: \lVert \bs \gamma -\bs \gamma^i \rVert\leq h}\left(\ell(\bs x, \tilde{\bs \xi}^i) - \mb E_{\bm\gamma^i}[\ell(\bs x, \tilde{\bs\xi} )] \right)\right| > \epsilon \cdot \sum_{j=1}^n \mb I(\lVert \bs \gamma-\bs \gamma^j \rVert \leq h)\bigg\lvert \{ \bs \gamma^i\}_{i=1}^n, \mathcal E^c\right) \\
&\leq 2 \cdot \exp\left(-2\epsilon^2 \cdot\frac{ [\sum_{j=1}^n \mb I(\lVert \bs \gamma - \bs \gamma^j\rVert \leq h)]^2}{\sum_{j=1}^n \mb I(\lVert \bs \gamma - \bs \gamma^j\rVert \leq h)} \right)\\
&= 2 \cdot \exp\left(-2 \epsilon^2  \cdot \sum_{j=1}^n \mb I(\lVert \bs \gamma - \bs \gamma^j\rVert \leq h)\right). 
\end{aligned}
\end{equation*}
Taking expectations on both sides yields
\begin{equation}\label{eq:expectation_both_sides}
\begin{aligned}
\mb P \left(\lvert\tilde{m}(\bs \gamma)- \hat{\mb{E}}_{\bm \gamma}[\ell(\bs x, \tilde{\bs\xi})] \rvert >\epsilon\right) 
&\leq 2\cdot \mb E\left[\exp\left(-2 \epsilon^2 \cdot \sum_{j=1}^n \mb I(\lVert \bs \gamma - \tilde{\bs \gamma}^j\rVert \leq h) \right)\mb I(\mathcal E^c) \right]\\
&= 2 \cdot \mb E\left[\exp\left(-2\epsilon^2 \cdot \tilde{Z}\right)\mb I(\tilde{Z}>0)\right],
\end{aligned}
\end{equation}
where $\tilde{Z} \sim \text{Bin}(n, p_h)$ is a binomial random variable with parameters $n$ and $p_h$. The last equality holds because $\sum_{j=1}^n \mb I(\lVert \bs \gamma - \tilde{\bs \gamma}^j\rVert \leq h )$ has the same distribution as~$\tilde{Z}$.  
Hence, we can compute the last expression in~\eqref{eq:expectation_both_sides} as  
\begin{equation}\label{eq:exact_expectation}
\begin{aligned}
 2 \cdot \mb E\left[\exp\left(-2\epsilon^2 \cdot \tilde{Z}\right)\mb I(\tilde{Z}>0)\right]
&= 2 \cdot \sum_{j=1}^n {n\choose j} ( 1- p_h)^{n-j} p_h^j \exp\bigg(-2j\epsilon^2 \bigg)\\
&= 2 \cdot \sum_{j=1}^n {n\choose j} ( 1- p_h)^{n-j}\left( p_h\exp\bigg(-2\epsilon^2 \bigg)\right)^j\\     
&= 2 \cdot \left(1- p_h + p_h \cdot \exp\left(-2\epsilon^2  \right) \right)^n - 2 \cdot (1- p_h)^n ,
\end{aligned}
\end{equation}
where the last equality holds because of the binomial theorem. Hence, we have 
\begin{equation*}
\displaystyle
\begin{aligned}
\lvert\tilde{m}(\bs \gamma)- \hat{\mb{E}}_{\bm \gamma}[\ell(\bs x, \tilde{\bs\xi})] \rvert \leq\epsilon
\end{aligned}
\end{equation*}
with probability at least $1-(2\left(1- p_h + p_h \cdot \exp\left(-2\epsilon^2  \right) \right)^n - 2(1- p_h)^n)$.

Combining the results of (a) and (b), we have
\begin{align*}
  \mb P (\lvert \mb{E}_{\bm \gamma}[\ell(\bs x, \tilde{\bs\xi})] - \hat{\mb{E}}_{\bm \gamma}[\ell(\bs x, \tilde{\bs\xi})] \rvert >Lh + \epsilon)   &\leq 2\left(1- p_h + p_h \cdot \exp\left(-2\epsilon^2  \right) \right)^n - (1- p_h)^n\\
  &\leq 2 \left(1- p_h + p_h \cdot \exp\left(-2\epsilon^2 \right) \right)^n\\
  & \leq 2 \exp\left(-np_h  (1-\exp(-2\epsilon^2)\right)\\
  & \leq 2 \exp\left(-\frac{np_h \epsilon^2}{2}\right),
\end{align*}
where the second inequality comes from the fact that $(1+x)^n \leq \exp(nx)$, for any $x\in \mathbb [-1,\infty)$ and $n \in \mathbb N_+$ while the last inequality comes from the fact that $1-\exp(-2\epsilon^2) \geq \epsilon^2/2$ for any $\epsilon \in [0,1]$. Because the marginal density function satisfies $\underline{f} \leq f(\bm \gamma)$, we can bound $p_h$ by
\begin{equation*}
    c\underline{f}h^p \leq p_h
\end{equation*}
where $c$ is a constant that only depends on $p$; if $\| \cdot \|$ denotes the Euclidean norm, then $c=\frac{\pi^{p/2}}{\Gamma(p/2 + 1)}$ and $ch^p$ represents the $p$-dimensional volume of the norm ball $\{\bm \theta: \| \bm \theta \| \leq h\}$ \cite[Equation 5.19.4]{olver2016nist}. Thus, we have
\begin{equation*}
    \mb P (\lvert \mb{E}_{\bm \gamma}[\ell(\bs x, \tilde{\bs\xi})] - \hat{\mb{E}}_{\bm \gamma}[\ell(\bs x, \tilde{\bs\xi})] \rvert >Lh + \epsilon)   \leq 2 \exp\left(-\frac{n c\underline{f}h^p \epsilon^2}{2}\right).
\end{equation*}
This completes the proof. 
\end{proof} 

Equipped with Proposition~\ref{thm:navie_finite_bd}, we are now ready to prove Theorem~\ref{thm:suboptimality}.

\begin{proof}[Proof of Theorem \ref{thm:suboptimality}]
Under the boundedness assumption of $\mathcal X$, for any $\tau>0$, we can construct a finite subset $\mathcal X_\tau\subset X$ of cardinality $|\mathcal X_\tau|=\mathcal O(1)(D/\tau)^d$ such that for any $\bm x\in\mathcal X$, there exists $\bm x'\in\mathcal X_\tau$ with $\|\bm x-\bm x'\|\leq \tau$. 
From Proposition \ref{thm:navie_finite_bd}, we have for any fixed $\bm x'\in\mathcal X_\tau$, 
\begin{equation*}
  \mb P \left(\lvert \mb{E}_{\bm \gamma}[\ell(\bs x', \tilde{\bs\xi})] - \hat{\mb{E}}_{\bm \gamma}[\ell(\bs x', \tilde{\bs\xi})] \rvert >L_\gamma h + \epsilon\right) \leq   2 \exp\left(-\frac{nc\underline{f}h^p \epsilon^2}{2}\right).
\end{equation*} 
Applying union bound, we get with probability at least $1-2| X_\tau| \exp\left(-{nc\underline{f}h^p \epsilon^2}/{2}\right)$:
\begin{align*}
\;\lvert \mb{E}_{\bm \gamma}[\ell(\bs x', \tilde{\bs\xi})] - \hat{\mb{E}}_{\bm \gamma}[\ell(\bs x', \tilde{\bs\xi})] \rvert \leq L_\gamma h + \epsilon \quad\forall\bm x'\in\mathcal X_\tau.
\end{align*}
Next, by the $L_x$-Lipschitz continuity of the loss function, we have for any $\bm x\in\mathcal X$,
\begin{align*}
\lvert \mb{E}_{\bm \gamma}[\ell(\bs x, \tilde{\bs\xi})] - \hat{\mb{E}}_{\bm \gamma}[\ell(\bs x, \tilde{\bs\xi})] \rvert & \leq  \lvert \mb{E}_{\bm \gamma}[\ell(\bs x, \tilde{\bs\xi})] - {\mb{E}}_{\bm \gamma}[\ell(\bs x', \tilde{\bs\xi})] \rvert+\lvert \mb{E}_{\bm \gamma}[\ell(\bs x', \tilde{\bs\xi})] - \hat{\mb{E}}_{\bm \gamma}[\ell(\bs x', \tilde{\bs\xi})] \rvert\\
& \hspace{6cm}+ \lvert \hat{\mb{E}}_{\bm \gamma}[\ell(\bs x, \tilde{\bs\xi})] - \hat{\mb{E}}_{\bm \gamma}[\ell(\bs x', \tilde{\bs\xi})] \rvert\\
&\leq \lvert \mb{E}_{\bm \gamma}[\ell(\bs x', \tilde{\bs\xi})] - \hat{\mb{E}}_{\bm \gamma}[\ell(\bs x', \tilde{\bs\xi})] \rvert  + 2 L_x \tau
\end{align*}
for some $\bm x'\in\mathcal X_\tau$ with $\|\bm x-\bm x'\|\leq \tau$. Thus, $\lvert \mb{E}_{\bm \gamma}[\ell(\bs x, \tilde{\bs\xi})] - \hat{\mb{E}}_{\bm \gamma}[\ell(\bs x, \tilde{\bs\xi})] \rvert \leq L_\gamma h + \epsilon + 2 L_x \tau$ for all $\bm x\in\mathcal X$ with probability at least $1-2| X_\tau| \exp\left(-{nc\underline{f}h^p \epsilon^2}/{2}\right)$. 
Next, we set $\delta=2 |X_\tau|\exp\left(-nc\underline{f}h^p \epsilon^2/2\right)$ and solve for $\epsilon$ to obtain the generalization bound
\begin{equation*}
\lvert\mb{E}_{\bm \gamma}[\ell(\bs x, \tilde{\bs\xi})] - \hat{\mb{E}}_{\bm \gamma}[\ell(\bs x, \tilde{\bs\xi})] \rvert \leq L_\gamma h + \sqrt{\frac{2\log\left(\frac{2|X_\tau|}{\delta}\right)}{ nc\underline{f}h^p}} + 2 L_x \tau \qquad\forall\bm x\in\mathcal X
\end{equation*}
with probability at least $1-\delta$. 

Let $\hat {\bm x}$ be a minimizer of \eqref{eq:nadaraya-watson_problem} and $\bm x^\star$ be a minimizer of \eqref{eq:conditional_expectation_problem}. Then, we have 
\begin{align*}
\mb{E}_{\bm \gamma}[\ell(\hat{\bs x}, \tilde{\bs\xi})] - {\mb{E}}_{\bm \gamma}[\ell(\bs x^\star, \tilde{\bs\xi})] & = \mb{E}_{\bm \gamma}[\ell(\hat{\bs x}, \tilde{\bs\xi})] - \hat{\mb{E}}_{\bm \gamma}[\ell(\hat{\bs x}, \tilde{\bs\xi})]  + \hat{\mb{E}}_{\bm \gamma}[\ell(\hat{\bs x}, \tilde{\bs\xi})]  - {\mb{E}}_{\bm \gamma}[\ell(\bs x^\star, \tilde{\bs\xi})]\\
&\leq \mb{E}_{\bm \gamma}[\ell(\hat{\bs x}, \tilde{\bs\xi})] - \hat{\mb{E}}_{\bm \gamma}[\ell(\hat{\bs x}, \tilde{\bs\xi})]  + \hat{\mb{E}}_{\bm \gamma}[\ell({\bs x}^\star, \tilde{\bs\xi})]  - {\mb{E}}_{\bm \gamma}[\ell(\bs x^\star, \tilde{\bs\xi})]\\
&\leq 2L_\gamma h + 2\sqrt{\frac{2\log\left(\frac{2|X_\tau|}{\delta}\right)}{ nc\underline{f}h^p}}+4L_x\tau
\end{align*}
with probability at least $1-\delta$.
Here, the first inequality holds because $\bm x^\star$ is suboptimal to the Nadaraya-Watson approximation problem. 
To minimize the total error from the bias and the MAD terms, we minimize the right-hand side of the inequality and obtain the optimal bandwidth as 
\begin{equation}
    h^\star = \left( \frac{2L_\gamma ^2nc\underline{f}}{p^2\log\left(\frac{2|X_\tau|}{\delta}\right)}\right)^{\frac{-1}{p+2}}.
\end{equation}
Substituting the bandwidth with the above
\begin{align*}
\mb{E}_{\bm \gamma}[\ell(\hat{\bs x}, \tilde{\bs\xi})] - {\mb{E}}_{\bm \gamma}[\ell(\bs x^\star, \tilde{\bs\xi})] \leq 2L_\gamma^{\frac{p}{p+2}} \frac{p+2}{(4p^p)^{\frac{1}{p+2}}}  \left(\frac{2\log\left(\frac{2|X_\tau|}{\delta}\right)}{nc\underline{f}} \right)^{\frac{1}{p+2}}+4L_x\tau
\end{align*}
with probability at least $1-\delta$.
Setting $|X_\tau|=\mathcal O(1)(D/\tau)^d$  yields the desired result. 
\end{proof}

\section{Concluding Remarks}
We have presented the first finite-sample generalization bounds for contextual stochastic optimization via kernel regression. The derivation is based on a mean-MAD (mean absolute deviation) decomposition of the error, diverging from the traditional mean-variance decomposition commonly used in the statistics literature. Our findings offer practical guidance on selecting the optimal bandwidth parameter and determining the sufficient number of samples to achieve a desired suboptimality gap. Furthermore, our proof technique may find relevance in other contextual stochastic optimization settings that utilize a biased estimator of risk measures. 

We remark that our result is conservative since we aimed for a simple analytical form comparable to existing bounds for sample-average approximation in stochastic programming. Improved results could be obtained by considering any valid intermediate bound in the proof.
As expected, due to the minimal assumptions made about the joint distribution, the sample complexity suffers from the curse of dimensionality in the covariates. Future research will focus on developing dimensionality reduction techniques and identifying specific distributional settings that enable improved sample complexity. Additionally, it will be interesting to extend the results to the multi-stage setting to address data-driven dynamic stochastic optimization problems.

\paragraph{Acknowledgements} 
Grani A.~Hanasusanto was supported by the National Science Foundation under grants 2342505 and 2343869. Chin Pang Ho was supported by the Research Grants Council (RGC) under the General Research Fund 11508623 and the CityU Start-Up Grant 9610481.

\bibliographystyle{abbrv}
\bibliography{references}

\newpage



\end{document}